\newcounter{prob}[section]
\newcounter{assmp}[section]
\theoremstyle{plain}
\newtheorem{theorem}{Theorem}[section]
\newtheorem{proposition}[theorem]{Proposition}
\theoremstyle{definition}
\newtheorem{definition}[theorem]{Definition}
\newtheorem{assumption}[assmp]{Assumption}
\newtheorem{problem}[prob]{Problem}
\theoremstyle{remark}
\newtheorem{remark}[theorem]{Remark}
\newcommand{\R}{\mathbb{R}}
\newcommand{\Rd}{\mathbb{R}^d}
\newcommand{\bx}{\mathbf{x}_N}
\newcommand{\bp}{\mathbf{p}_N}
\newcommand{\rev}[1]{\textcolor{black}{#1}}
\def\BibTeX{{\rm B\kern-.05em{\sc i\kern-.025em b}\kern-.08em
    T\kern-.1667em\lower.7ex\hbox{E}\kern-.125emX}}
\begin{document}
\title{Uniform Sampling from the Reachable Set Using \\ Optimal Transport}
\author{Karthik Elamvazhuthi and Sachin Shivakumar
\thanks{To be submitted for review.}
\thanks{This work was supported in part by the LDRD program of Los Alamos National Laboratory. }
\thanks{K. Elamvazhuthi \{karthikevaz@lanl.gov\} and S. Shivakumar \{sshivakumar@lanl.gov\} are with Applied Mathematics \& Plasma Physics Division, Los Alamos National Laboratory.}%
\thanks{S. Shivakumar is with Center for Nonlinear Studies, Los Alamos National Laboratory.}}

\maketitle

\begin{abstract}
\rev{
Estimating the reachable set of a dynamical system is a fundamental problem in control theory, particularly when control inputs are bounded. Direct simulation using randomly sampled admissible controls often leads to trajectories that cluster near attractors, resulting in poor coverage of the reachable set. To achieve a more uniform distribution of terminal states, we formulate the problem within an Optimal Transport (OT) framework. In this setting, the goal is to steer the system so that the final state distribution, determined by the chosen controls and initial conditions, matches a desired target distribution. Enforcing this condition exactly is not possible since the reachable set is not known. So we introduce an $L_2$-norm based regularization of the terminal distribution that relaxes the constraint while promoting uniform coverage. The resulting formulation can be approximated by a finite-dimensional, particle-based optimal control problem with kernel-coupled terminal cost. We show that this approach converges to the original formulation and demonstrate through numerical examples that it provides significantly more uniform reachable-set sampling than random control strategies.}

% One method to approximate the reachable set is to integrate the system for many random choices of the control input and record the resulting final states. Unfortunately, this method fails when an attractor is present, because the samples cluster near the attractor -- yielding a poor representation of the reachable set. A better representation can be obtained by using a collection of control inputs that produces trajectories whose terminal distribution is uniform over the reachable set. The task of finding such collection of inputs can be formulated as an Optimal Transport (OT) problem with the uniform measure as the target. 

% A major challenge, to solve this OT problem, is that the reachable set is not known in advance. However, we can soften this terminal measure constraint using an $L_2$-entropy function -- avoiding the need for prior knowledge of the reachable set. By approximating the transport cost in a nonlocal way using a kernel regularization, for finite number of samples, the problem reduces to a finite-dimensional optimal control problem. This yields a hierarchy of optimization problems, between which we establish $\Gamma$-convergence from one level to the next. Since the final approximation reduces the problem to a finite-dimensional optimal control problem, we can compute the solution using Pontryagin’s Maximum Principle. The resulting control inputs generate sample points that are approximately uniformly distributed over the reachable set.
\end{abstract}

%\begin{IEEEkeywords}
%Monte Carlo methods, Nonlinear systems, eachability #analysis, Optimal Transport.
%\end{IEEEkeywords}

\section{Introduction}
\label{sec:introduction}
The reachable set associated with a control system describes the collection of all states that can be attained from a given initial condition under admissible control inputs over a finite time horizon. Given that it captures the range of system behavior it plays a central role in safety verification, constrained motion planning, and filtering \cite{bansal2017hamilton,jafarpour2023interval, coogan2015efficient}. In systems governed by nonlinear, underactuated, or input-constrained dynamics, the structure of the reachable set can be complex. \rev{It may lack convexity or admit no analytic description.} These features complicate both analysis and computation, particularly in contexts where the reachable set must be integrated, sampled, or used as a prior in estimation or control synthesis.

A number of works have addressed the issue of approximating the reachable set \cite{bansal2017hamilton,jafarpour2023interval, coogan2015efficient,thorpe2021learning,lew2021sampling,lew2025convex,devonport2021data}. Among these, sampling-based approaches have recently been explored for constructing approximations of the reachable set \cite{thorpe2021learning,lew2021sampling,lew2025convex,devonport2021data}. A common strategy involves selecting a collection of initial states and control inputs, evolving the system forward, and recording the resulting terminal states. This procedure implicitly induces a measure on the reachable set -- with an associated probabilistic guarantee on coverage of the reachable set --  given by the pushforward of the sampling distribution under the system’s endpoint map.

When probabilistic guarantees are provided in the context of sampling-based reachability computations, one must clarify the probability measure under consideration to understand the nature of such guarantees. In many settings, such guarantees are given with respect to the distribution induced by randomly chosen controls. Depending on the choice of control distribution, the resulting pushforward measure may concentrate on a lower-dimensional subset of the reachable set, particularly in systems with attractors or dominant modes. In such cases, regions of the reachable set can be underrepresented in the sample, reducing the effectiveness of the approximation for tasks that benefit from more uniform coverage, such as worst-case analysis or robust safety verification.

The goal of this paper is to address these challenges associated with sampling-based approximations of the reachable set by invoking ideas from optimal transport theory \cite{figalli2021invitation}. In particular, we draw upon developments in the optimal transport of nonlinear control systems \cite{agrachev2009optimal,rifford2014sub,elamvazhuthi2024benamou}. Beyond the original problem of transporting control systems between measures, optimal transport has already been found to be fruitful in related topic of uncertainty propagation of control systems \cite{halder2017gradient,aolaritei2025distributional}.

\rev{Before outlining the main contributions, we briefly note that our computational approach builds on the blob method \cite{carrillo2019blob,craig2025blob}, which approximates probability measures using interacting particles with smooth kernel influence. This particle-based, mesh-free formulation avoids grid-based PDE solvers for optimal transport and enables a scalable realization of the proposed reachability sampling framework.
}

\subsection*{Contribution}
First, note that the task of sampling from a set can be reformulated as transporting one probability measure to another supported on that set. Hence, optimal transport (OT) provides a natural conceptual framework for this problem.  Using this idea, we make the following contributions:

\begin{enumerate}
    \item \textbf{Transport formulation:} We pose the reachable set sampling problem as an Optimal Transport (OT) problem \rev{by searching for a joint measure on the product space of admissible initial conditions and controls such that the pushforward through the system’s endpoint map is the uniform distribution over the reachable set.}

    \item \textbf{Entropy-regularized relaxation:} \rev{While OT-based formulations have been used for sampling \cite{onken2021ot}, they typically assume that the target distribution is known. In contrast, the reachable set and its uniform measure are not known \emph{a priori}}. To address this, we introduce an entropy-type regularization using an $L_2$-based penalty that yields a tractable relaxation. To enable a particle approximation, this penalty is further regularized following the blob method for OT \cite{craig2025blob}. %The resulting formulation can then be approximated using a finite set of particles.

    \item \textbf{Convergence of the relaxed problem:} The entropy regularization, nonlocal blob-based smoothing, and finite-sample approximations define a hierarchy of optimization problems that successively approximate transport to a uniform measure. We establish that the minimizers of each level of this hierarchy converge (up to a subsequence) to minimizers of the next, ensuring consistency across relaxations.

    \item \textbf{Numerical Examples:} We demonstrate the performance of this method on nonlinear systems with strong attractors, where naive random sampling produces concentrated terminal distributions. The proposed method mitigates this attractor-bias and achieves a significantly more uniform coverage of the reachable set.
\end{enumerate}

\section*{Notation}
\rev{We introduce some notations and notions from optimal transport theory \cite{figalli2021invitation}.} \rev{For a Polish space, i.e., completely metrizable and seperable space $X$, we will denote the set of Borel probability measures on $X$ by $\mathcal{P}(X)$.} The set $\mathcal{P}_1(\Rd)$, will denote the subset of $\mathcal{P}(\Rd)$ with finite first moment -- i.e., $\mu \in \mathcal{P}_1(\Rd)$ implies $\int_{\Rd}|x|d\mu(x) < \infty$. For a Borel map $T : \mathbb{R}^d \to \mathbb{R}^d$ and $\mu \in \mathcal{P}(\mathbb{R}^d)$, $T_{\#} \mu$ denotes the push-forward of $\mu$ through $T$. The set of admissible controls, $\mathcal{U}$, is always equipped with the weak topology defined by the $L_2$ topology. We will say that $\mu << \nu$ if the Borel measure $\mu$ is absolutely 
continuous with respect to the Borel  measure $\nu$. The density of the measure $\mu$ with respect to $\nu$ will be denoted by $\frac{d\mu}{d\nu}$. If $\nu$ is the Lebesgue measure, denoted by $\lambda$, we will sometimes abuse the notation and identify $\mu$ with its density with respect to $\lambda$. Convergence in $\mathcal{P}(X)$ is the narrow convergence, denoted by $\mu_n\rightharpoonup\mu$, with respect to dual-pairing defined using the set of \rev{continuous}  bounded functions $C_b(X)$, i.e., a sequence $\{\mu_n\}\subset \mathcal{P}(X)$ narrow converges to $\mu\in \mathcal{P}(X)$ if and only if 
$\int_X f(x) d\mu_n(x) \to \int_X f(x) d\mu(x) \;\;\, \forall f\in C_b(X)$.
Convergence in $\mathcal{P}_1(\Rd)$ is the convergence in the Wasserstein-1 metric. Note the fact that if the sequences in $\mathcal{P}_1(\Rd)$ under consideration have a common compact support (usually, the reachable set for the purposes of this paper), then convergence in 
$\mathcal{P}_1(\Rd)$ is equivalent to narrow convergence. \rev{We denote by $L^2(0,T;\mathbb{R}^m)$ the space of square-integrable functions 
$u:[0,T]\to\mathbb{R}^m$ equipped with the norm 
$\|u\|_{L^2} = \left(\int_0^T \|u(t)\|^2\,dt\right)^{1/2}$.
Similarly, $\mathcal{C}([0,T];\mathbb{R}^d)$ denotes the space of continuous functions 
$x:[0,T]\to\mathbb{R}^d$ endowed with the supremum norm 
$\|x\|_{\infty} = \sup_{t\in[0,T]} \|x(t)\|$.
Throughout the paper, we use $\|\cdot\|$ to denote function norms, 
and $|\cdot|$ exclusively for finite dimensional 2 norm. We will typically work with the weak topology on bounded subsets of $L^2(0,T);\R^m$ which are metrizable and seperable. Hence, Polish.}

\section{Problem Formulation and Solution}\label{sec:problem}
First, let us formulate the reachability sampling problem for control-affine ODE systems in the form of an Optimal Transport (OT) problem. Consider the control system,
\begin{align}
\label{eq:ctrsys}
\dot{x}(t) = f(x(t), u(t)) : = f_0(x(t)) + \sum_{i=1}^m u_i (t)f_i(x(t)) ,
\end{align}
where \( x(t) \in \mathbb{R}^d \) and \( u(t) \in U \) denotes the control input. For some admissible control set \rev{$U \subset \R^m$}, the space of controls is defined as  
\[
\mathcal{U}=  \left\{ u \in L^2(0,T;\R^m)\;\mid\; u(t) \in U ~\text{for a.e} ~t \in (0,T) \right\}. \]
The set of admissible trajectories, $\Gamma \subset C([0,T];\mathbb{R}^d)$, is given by
\begin{equation}
\label{eq:trajectoryset}
\Gamma = \left\{ x \in C([0,T];\mathbb{R}^d) \;\middle|\; 
\begin{aligned}
&\exists\, u(\cdot) \in \mathcal{U} \text{ such that } \\
&x(t) \text{ solves } \eqref{eq:ctrsys} 
\end{aligned}
\right\}.
\end{equation}

Given a compact set of admissible initial conditions, \( \Omega \subseteq \mathbb{R}^d \), the \emph{reachable set} 
at time \( T \), denoted by \( \mathcal{R}^T_{\Omega} \subset \mathbb{R}^d \), 
is given by
\begin{equation}
\label{eq:reachableset}
\mathcal{R}^T_{\Omega}\! =\! \left\{ x_T \in \mathbb{R}^d \mid \exists \gamma \in \Gamma, \;\text{such that}\;\gamma(T) \!= \!x_T,\; \gamma(0)\! \in\! \Omega
\right\}.
\end{equation}
The main goal is then to uniformly sample from $\mathcal{R}^T_{\Omega}$.
\begin{problem}\label{prob:MainGoal}
Let $m_R$ be the Lebesgue measure restricted to the reachable set $\mathcal{R}^T_{\Omega}$. Then, generate uniformly distributed samples from $m_R$.
\end{problem}

To pose the above sampling problem as an OT problem, we first define $E : \Omega \times \mathcal{U} \rightarrow \Rd  $ to be the end point map given by
\begin{equation}
E(x_0,u) = \gamma(T)~~ \text{with}~~\gamma \in \Gamma ~\text{and}~\gamma(0) = x_0, 
\end{equation}
where $\gamma \in \Gamma$ is the trajectory corresponding to $u \in \mathcal{U}$. \rev{Here, we refer to $E:\Omega \times \mathcal{U} \to \mathbb{R}^d$ as the \emph{end-point map}, since it takes both the initial condition and control trajectory to the corresponding terminal state, although in the literature one typically fixes $x_0$.} \rev{To ensure regularity of this endpoint map, $E$, and well-posedness of the corresponding optimal transport problem we make the following assumptions that is standard in optimal control theory \cite{clarke2013functional}.}
\begin{assumption}[Smoothness and Linear Growth]
\label{asmp1:sublin}
Let $f_0, f_1, \dots, f_m : \mathbb{R}^d \to \mathbb{R}^d$ be the vector fields defining the control system. We assume:
\begin{enumerate}
    \item Each $f_i$ is \rev{continuously} differentiable, i.e., $f_i \in C^1(\mathbb{R}^d; \mathbb{R}^d)$ for all $i = 0, \dots, m$.
    \item There exists a constant $C > 0$ such that $\|f_i(x)\| \leq C(1 + \|x\|)$ for all $x \in \mathbb{R}^d$ and all $i = 0, \dots, m$.
\end{enumerate}
\end{assumption}

\begin{assumption}[Admissible Initial Conditions and Controls]
\label{asmp2:ctrl}
Given a dynamical system of the form \eqref{eq:ctrsys} whose vector fields satisfy Assumption \ref{asmp1:sublin}, we assume:
\begin{enumerate}
\item The set of admissible initial conditions $\Omega \subset \Rd$ is non-empty and compact. 
\item The set of controls, $U \subset \mathbb{R}^m$, is nonempty, compact, and convex.
\end{enumerate}
\end{assumption}
As a consequence of the two assumptions, the end-point map $E$ is continuous and the set $\mathcal{U}$ is compact in the weak topology in $L_2$. \rev{For instance, this is proved in~\cite[Thm.~23.11]{clarke2013functional}; see the Supplementary Material for a more detailed discussion.} Hence, the reachable set $\mathcal{R}^T_{\Omega}$ is compact.
Next, \rev{to address} Prob. \ref{prob:MainGoal}, we introduce an OT problem below.
\begin{problem}\label{prob:GoalOT} Given the Lebesgue measure of the reachable set, $c = \int_{\mathcal{R}^T_{\Omega}} dx$, solve the following transport problem \begin{align}
\label{eq:OTsamp}
\min_{
    \mathbb{P} \in \mathcal{P}(\Omega \times \mathcal{U})} \;\;
  \int_{\Omega \times \mathcal{U}}\int_{0}^T |u(t)|^2 dtd\mathbb{P}(x_0,u)
  \\
\text{s.t.,}~~E_{\#}\mathbb{P} = \frac{1}{c}\mathbf{1}_{\mathcal{R}^T_{\Omega}} = :m_R,\notag
\end{align}
\rev{where $\mathbf{1}_{\mathcal{R}^T_{\Omega}}$ is the indicator function on the reachable set.}
\end{problem}
Here, we abuse notation by identifying $m_{R}$ with the measure it defines on $\Rd$: $
m_R (A)=\frac{1}{c} \lambda (A \cap \mathcal{R}^T_{\Omega})$
for all Borel measurable sets $A \subseteq \Rd$.
\rev{The idea behind this new problem is that to uniformly sample from the reachable set, we solve a collection of optimal control problems that would be uniformly distributed on the reachable set.}
To ensure wellposedness of Prob. \ref{prob:GoalOT}, the following assumption is necessary.
\begin{assumption}
\label{asmp:fullleb}
The reachable set \( \mathcal{R}^T_{\Omega} \subset \mathbb{R}^d \) has positive and finite Lebesgue measure -- i.e., $0 < \lambda (\mathcal{R}^T_{\Omega}) < \infty$.
\end{assumption}
\rev{If the set of admissible conditions is a singleton, the above assumption holds true when the control system is locally controllable, or accessible. The assumption also holds true if the set of initial conditions has positive Lebesgue measure since the flow map for any fixed constant control is known to be Bi-Lipschitz.}

Problem \ref{prob:GoalOT} is distinct from the usual formulation of optimal transport with optimal control costs \cite{agrachev2009optimal}. Particularly, the projection of $\mathbb{P}$ on the initial conditions $\Omega$ is a variable. 
Using ideas from \cite{elamvazhuthi2024benamou} on the equivalence between the Kantorovich formulation and the Benamou-Brenier formulation, the formulations in \cite{agrachev2009optimal,rifford2014sub} can be shown to be equivalent to ours when $\Omega = x_0$, in which case we have posed an optimal transport problem of transferring from $\delta_{x_0}$ to $m_R$.

%The goal of the paper is to develop a numerical method to solve this optimal transport problem. 
One can solve Problem \ref{prob:GoalOT} using the fluid dynamical formulation of optimal transport \cite{benamou2000computational}. However, it leads to a partial differential equation constrained optimization problem---typically solved by discretization. Instead, we use a finite sample approximation of this problem using the blob method for optimal transport introduced in \cite{craig2025blob}.

To develop a numerical method, first, we redefine Problem \ref{prob:GoalOT} as an unconstrained optimization problem by introducing a barrier functional, $\mathcal{F} \colon \mathcal{P}_1(\mathbb{R}^d) \ \longrightarrow\ [0,\infty]$, given
by\vspace{-3mm}
\begin{align*}
\mathcal{F}(\mu) :=
\begin{cases}
0, & \text{if } \mu = m_R, \\[0.3em]
\infty, & \text{otherwise}.
\end{cases}
\end{align*}

\vspace{-3mm}
Given this $\mathcal{F}$, the OT problem in \eqref{eq:OTsamp} is equivalent to
\begin{align}
\label{eq:OTsamp2}
\min_{
    \mathbb{P} \in \mathcal{P}(\Omega \times \mathcal{U})} 
 \mathcal{E}, \qquad \mathcal{E} := \mathcal{G}(\mathbb{P}) + \mathcal{F}(E_{\#} \mathbb{P}),
\tag{$\mathrm{ROT}$}
\end{align}
where $\mathcal{G}(\mathbb{P}) := \int_{\Omega \times \mathcal{U}}\int_0^T |u(t)|^2 \,dt\, d\mathbb{P}(x_0,u)$. However, to ensure numerical tractability, we first relax $\mathcal{F}$ to obtain
\begin{align*}
\min_{
    \mathbb{P} \in \mathcal{P}(\Omega \times \mathcal U)} 
  \int_{\Omega \times \mathcal{U}} \int_0^T |u(t)|^2 \, dt \, d\mathbb{P}(x_0, u)
  + \mathcal{F}_{\varepsilon}(E_{\#}\mathbb{P})
\end{align*}
where $\mathcal{F}_{\varepsilon}:\mathcal{P}(\mathbb{R}^d) \rightarrow \mathbb{R} \cup \left \{ \infty\right \}$ 
is some entropy penalization.

A common choice for entropy function in such situations is the functional $\rho \mapsto \varepsilon^{-1}\int \rho \log \rho$ (known as Shannon Entropy). However, to simplify the proof of convergence results we instead consider the $L_2$ energy function, \rev{for $\varepsilon >0$},
\begin{equation}\label{eq:L2entropy}
\mathcal{F}_{\varepsilon}(\mu) = 
\begin{cases}
\frac{1}{\varepsilon}\int_{\mathbb{R}^d} \big( \frac{d\mu}{d \lambda} \big )^2 dx, \; &\text{if}~~ \mu << \lambda,\\
\infty, &\text{otherwise},
\end{cases}
\end{equation}
where $\lambda$ is the Lebesgue measure on $\Rd$.
The justification for the penalization form \eqref{eq:L2entropy} is that it achieves an effect similar to Shannon entropy function. For a square integrable function on the compact set $\mathcal{R}_\Omega^T$, $\rho:\mathcal{R}_\Omega^T \rightarrow \R$, that integrates to $1$ on $\Omega$, the Jensen's inequality implies that $\int_{\mathcal{R}^T_\Omega} |\rho(x)|^2dx \geq (\int_{\mathcal{R}_\Omega^T} |\rho(x)|dx)^2$, where equality holds only if $\rho$ is constant. Therefore, the minimizer of $\mathcal{F}_{\varepsilon}(\mu)$ among all measures supported on the reachable set is the uniform probability measure and the minimum value is $\frac{1}{\lambda(\mathcal{R}_\Omega^T)}$. Thus, problem in \eqref{eq:OTsamp2} can be relaxed to 
\begin{align}
\label{eq:OTsamp2ep}
\min_{
    \mathbb{P} \in \mathcal{P}(\Omega \times \mathcal{U})}  
 \mathcal{E}_{\varepsilon},~~\mathcal{E}_{\varepsilon} : = \mathcal{G}( \mathbb{P})+ \mathcal{F}_{\varepsilon}(E_{\#}\mathbb{P}).
  \tag{$\mathrm{ROT}_{\varepsilon}$}
\end{align}

\section*{Particle Method}
Problem \eqref{eq:OTsamp2ep} is difficult to solve numerically since the decision variable $\mathbb{P}$ is infinite-dimensional. To obtain a numerically implementable algorithm, we can approximate the infinite-dimensional probability measure by a finite set of discrete measures (Dirac delta measures), which naturally leads to a particle-based optimization problem.
However, sums of dirac delta measures are not absolutely continuous with respect to $\lambda$ and $\mathcal{F}_\varepsilon$ is not well-defined for such measures. To overcome this issue, we smoothen the $\mathcal{F}_\varepsilon$ using a {\it symmetric positive mollifier} $k_\delta$ defined as follows.

\begin{definition}\label{def:mollifier}
A smooth, nonnegative, even function $k \in C_b(\mathbb{R}^d)$ is a \emph{symmetric positive mollifier} if it is compactly supported, $\int k(x) dx = 1$, and the limit, $\lim\limits_{\epsilon\to 0} \epsilon^{-d} k(y/\epsilon)$, is the Dirac delta function at $y$.    
\end{definition}

Given a mollifier $k$ with finite first-order moment, i.e., $k \in \mathcal{P}_1(\R^d)$, we define $k_\delta(y) = \delta^{-d} k(y/\delta)$---giving us a sequence of mollifiers with adjustable width parameter, $\delta$. For any probability measure $\nu\in \mathcal{P}(\R^d)$, the convolution of $\nu$ with $k_\delta$ is a bounded, continuous function, given by $ k_\delta*\nu(y) = \int_{\mathbb{R}^d} k_\delta (y-x) d \nu(x)$.
If we approximate $\mathcal{F}_\varepsilon$ by a sequence mollified functionals of the form
\begin{align} \label{Fepsdeldef}
\mathcal{F}_{\varepsilon,\delta}(\mu) : =  \frac{1}{\varepsilon}  \int_{\mathbb{R}^d} \left| k_\delta*\mu(y) \right|^2 dy,
\end{align}
we obtain an terminal entropy functional that is well-defined for Dirac delta measure.
This gives us the relaxed problem
\begin{align}
\label{eq:OTsamp2epdel}
\min_{
    \mathbb{P} \in \mathcal{P}(\Omega \times \mathcal{U})}  \mathcal{E}_{\varepsilon,\delta},~~~
  \mathcal{E}_{\varepsilon,\delta} : = \mathcal{G}( \mathbb{P})+ 
   \mathcal{F}_{\varepsilon,\delta}(E_{\#}\mathbb{P})
  \tag{$\mathrm{ROT}_{\varepsilon,\delta}$}
\end{align}
where, again, $\mathcal{G}( \mathbb{P}) =  \int_{\Omega\times \mathcal{U}} \int_{0}^T |u(t)|^2 \,dt\, d\mathbb{P}(x_0,u)$.
We note that, the square inside the integral of $\mathcal{F}_{\varepsilon,\delta}$ can be simplified as follows:
\begin{align*}
    &\varepsilon\mathcal{F}_{\varepsilon,\delta}(\mu)=\int_{\R^d} \left| k_\delta * \mu(y)\right|^2 dy\\
    % &=\int_{\R^d}\left( \int_{\R^d} k_{\delta}(y-x)d\mu(x)\right) \left(\int_{\R^d}k_{\delta}(y-z)d\mu(z)\right) dy\\
    % &=\int_{\R^d}\left( \int_{\R^d} \left(\int_{\R^d}k_{\delta}(y-x)k_{\delta}(y-z)d\mu(z)\right) d\mu(x)\right) dy\\
    &=\int_{\R^d} \int_{\R^d} \left(\int_{\R^d}k_{\delta}(y-x)k_{\delta}(y-z)dy\right)d\mu(z) d\mu(x)\\
    &:=\int_{\R^d}\left( \int_{\R^d} K_{\delta}(x-z) d\mu(z) \right)d\mu(x).
\end{align*}
%where we have used the fact that $k_\delta$ is even, nonnegative, and $k_\delta(y) = \delta^{-d} k_\delta(y/\delta)$ -- implying $\tilde{k}_{\delta}(x-z)$ is even, nonnegative, and $\tilde{k}_{\delta}(x-z) = \delta^{2d} \tilde{k}_{d}((x-z)/\delta)$.
%Thus, $\tilde{k}_{d}$ is another mollifier of width $\delta^2$??, which we now denote by $K_\delta$ for notational simplicity.

Lastly, by approximating $\mu := E_{\#}\mathbb{P} \approx \frac{1}{N}\sum^N_{i=1} \delta_{x_i}$, we get the finite particle-based optimization problem 
\begin{align}
\label{eq:OTsamp2epdelN}
\min_{\substack{x_{i,0} \in \Omega \\ u_i \in \mathcal{U}}} 
& \Bigg\{
 \frac{1}{N} \sum_{i=1}^N \int_{0}^T |u_i(t)|^2\, dt \notag\\
& \quad + \frac{1}{N^2} 
  \frac{1}{\varepsilon} \sum_{i=1}^N \sum_{j=1}^N 
   K_\delta\!\big(x_i(T) - x_j(T)\big)
\Bigg\} \notag\\
\text{s.t.,} \quad 
& \dot{x}_i(t) = f\big(x_i(t),u_i(t)\big), \;\; x_i(0) = x_{i,0}.\tag{$\mathrm{ROT}_{\varepsilon,\delta,N}$}
\end{align}

\section{Main Results}
Having proposed successive relaxations of \eqref{eq:OTsamp2}, we must validate this hierarchy of approximations by establishing the convergence of minimizers of these approximations to the minimizer of the original problem. %First, we consider the feasibility of Problem \eqref{eq:OTsamp2}. 
However, let us first establish some preliminary well-posedness results.%, for e.g., feasibility of \eqref{eq:OTsamp2}.

\begin{theorem}\label{thm:existence}
Given Assumptions~\ref{asmp1:sublin}, \ref{asmp2:ctrl} and \ref{asmp:fullleb}, and assuming a weak topology on $\mathcal{U}$, the optimization problem \eqref{eq:OTsamp2} is feasible. 
\rev{Moreover, an optimal solution exists for each of the optimization problems \eqref{eq:OTsamp2}, \eqref{eq:OTsamp2ep}, \eqref{eq:OTsamp2epdel}, and \eqref{eq:OTsamp2epdelN}.}
\end{theorem}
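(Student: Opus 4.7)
The plan is to apply the direct method of the calculus of variations: combine narrow compactness of the feasible set with the lower semicontinuity already established in the preceding proposition, and check that each infimum is finite so that a Weierstrass-style argument applies nontrivially.

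For the three infinite-dimensional problems \eqref{eq:OTsamp2}, \eqref{eq:OTsamp2ep} and \eqref{eq:OTsamp2epdel}, the feasible set is $\mathcal{P}(\Omega\times\mathcal{U})$. Assumption~\ref{asmp2:ctrl} gives that $\Omega$ is compact, and the remark following that assumption notes that $\mathcal{U}$ is compact in the weak $L_2$ topology. Hence $\Omega\times\mathcal{U}$ is compact and metrizable (hence Polish), and by Prokhorov's theorem $\mathcal{P}(\Omega\times\mathcal{U})$ is itself narrowly compact. To rule out a trivial $+\infty$ infimum: $\mathcal{G}(\mathbb{P}) \le T\sup_{u\in U}|u|^2<\infty$ uniformly in $\mathbb{P}$, since $U$ is compact; the previous theorem supplies a feasible $\mathbb{P}^\star$ for \eqref{eq:OTsamp2} with $E_{\#}\mathbb{P}^\star=m_R$, and because $m_R$ has constant density $1/c$ we have $\mathcal{F}_\varepsilon(m_R)=1/(\varepsilon c)<\infty$, so $\mathbb{P}^\star$ also gives a finite value for \eqref{eq:OTsamp2ep}. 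For \eqref{eq:OTsamp2epdel} finiteness is automatic since $E_{\#}\mathbb{P}$ is supported on the compact reachable set, making $k_\delta\ast E_{\#}\mathbb{P}$ bounded with compact support. Combining narrow compactness, narrow lower semicontinuity, and finiteness of the infimum then yields existence of a minimizer in each case.

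For the particle problem \eqref{eq:OTsamp2epdelN}, rather than passing through empirical measures (whose set of $N$ atoms might collide in the limit), I would argue directly in the compact product space $(\Omega\times\mathcal{U})^N$, equipped with the product of Euclidean and weak $L_2$ topologies. The control cost $\tfrac{1}{N}\sum_i\|u_i\|_{L^2}^2$ is weakly lower semicontinuous coordinate-wise, and the kernel sum $\tfrac{1}{\varepsilon N^2}\sum_{i,j}K_\delta(x_i(T)-x_j(T))$ is jointly continuous because the endpoint map $E$ is continuous in $(x_{i,0},u_i)$ and $K_\delta=k_\delta\ast k_\delta$ is continuous and bounded. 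Weierstrass then delivers a minimizer. I do not anticipate a deep obstacle; the only subtlety is keeping track of the two topologies in play (narrow convergence on $\mathcal{P}(\Omega\times\mathcal{U})$ versus weak $L_2$ on $\mathcal{U}$), and confirming that the decomposition of the particle objective into a weakly-LSC term plus a continuous term transfers cleanly from the infinite-dimensional setting of the previous proposition.
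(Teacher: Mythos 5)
Your treatment of the three measure-level problems \eqref{eq:OTsamp2}, \eqref{eq:OTsamp2ep}, \eqref{eq:OTsamp2epdel} is the same as the paper's: narrow compactness of $\mathcal{P}(\Omega\times\mathcal{U})$ (following from compactness of $\Omega\times\mathcal{U}$) combined with the lower semicontinuity established in the preceding proposition. Your additional check that the infima are finite is a useful piece of hygiene the paper leaves implicit, and the choice of witness $\mathbb{P}^\star$ with $E_\#\mathbb{P}^\star=m_R$ is exactly the right one since $\mathcal{F}_\varepsilon(m_R)=1/(\varepsilon c)$. Where you diverge from the paper is \eqref{eq:OTsamp2epdelN}: the paper simply cites an existence theorem from classical optimal control (Clarke), whereas you give a self-contained Weierstrass argument on the compact product $(\Omega\times\mathcal{U})^N$ with the Euclidean--weak-$L^2$ product topology, splitting the objective into a weakly lower semicontinuous control cost (the squared $L^2$ norm is convex and strongly continuous, hence weakly LSC coordinate-wise) and a continuous kernel term (the endpoint map is continuous into $\mathbb{R}^d$ and $K_\delta$ is continuous and bounded). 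That decomposition is correct, and it has the virtue of making the particle-level argument structurally parallel to the measure-level one rather than delegating to a black-box reference; the paper's citation is shorter but less transparent. Both routes are valid.
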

\begin{proof} By definition, analytic spaces are projection of subsets of spaces $X \times X $, where $X$ is Polish \cite[Section 1.3.4]{doberkat2007stochastic}.
Since \rev{$\mathcal{U}$ with the weak $L_2$-topology is metrizable \cite[Theorem~5.1]{conway2019course} and {\it Polish}, it is analytic. From the assumptions, the map $E: \Omega \times \mathcal{U} \mapsto \mathcal{R}^T_{\Omega}$ is continuous and surjective}. Therefore, the map
$E_{\#} : \mathcal{P}(\Omega \times \mathcal{U}) \rightarrow \mathcal{P}(\mathcal{R}^T_{\Omega})$ is also continuous and surjective \cite[Prop.~1.101]{doberkat2007stochastic}. \rev{Since $m_R\in \mathcal{P}(\mathcal{R}^T_{\Omega})$, from surjectivity of $E_{\#}$, there exists a $\mathbb{P} \in \mathcal{P}(\Omega \times \mathcal{U})$ such that $E_{\#}\mathbb{P} = m_R$. This proves feasibility of \eqref{eq:OTsamp2}.} 
% This concludes the proof.

% \rev{An optimal solution exists if the functionals $\mathcal{E}:\mathcal{P}(\Omega \times \mathcal{U}) \rightarrow \R \cup \{ \infty\}$, $\mathcal{E}_{\varepsilon}:\mathcal{P}(\Omega \times \mathcal{U}) \rightarrow \R \cup \{ \infty\}$ and $\mathcal{E}_{\varepsilon,\delta}:\mathcal{P}(\Omega \times \mathcal{U}) \rightarrow \R \cup \{ \infty\}$ are lower semicontinuous with respect to narrow convergence on $\mathcal{P}(\Omega \times \mathcal{U})$ and $\mathcal{P}(\Omega \times \mathcal{U})$ is compact. We can prove this as below.}

\rev{Next, we prove existence of optimal solutions. For this, we will use the fact that optimal solution for a function exists if the function is lower semicontinuous on a compact space.}

\rev{The functionals $\mathcal{F}$, $\mathcal{F}_{\varepsilon}$ and $\mathcal{F}_{\varepsilon,\delta}$ are lower-semicontinuous on $\mathcal{P}_1(\Rd)$ by \cite[Proposition 3.9]{carrillo2019blob} and \cite[Lemma 3.1]{craig2025blob}.}
Since $E_{\#}$ continuous from $\mathcal{P}(\Omega \times \mathcal{U})$ to $\mathcal{P}_1(\Rd)$, the compositions
\[\mathbb{P} \mapsto \mathcal{F}(E_{\#}\mathbb{P}), \quad \mathbb{P} \mapsto \mathcal{F}_{\varepsilon}(E_{\#}\mathbb{P}), \quad \mathbb{P} \mapsto \mathcal{F}_{\varepsilon,\delta}(E_{\#}\mathbb{P}),\] inherit lower-semicontinuity of their corresponding functionals. Since the map $u \mapsto \|u\|^2_2$ is \rev{lower semicontinuous}, by \cite[Lemma 5.1.7]{ambrosio2005gradient}, $\mathcal{G}$ is \rev{lower semicontinuous}. Thus, $\mathcal{E}$, $\mathcal{E}_{\varepsilon}$ and $\mathcal{E}_{\varepsilon,\delta}$ are lower-semicontinuous.

Thus, using lower semicontinuity and compactness of  $\mathcal{P}(\Omega \times \mathcal{U})$ \rev{(follows from compactness of $\mathcal U$)}, we conclude that optimal solutions exist for \eqref{eq:OTsamp2}, \eqref{eq:OTsamp2ep}, and \eqref{eq:OTsamp2epdel}. Lastly, existence of optimal solutions for \eqref{eq:OTsamp2epdelN} has been established in classical control theory; See \cite[Theorem 23.11]{clarke2013functional}.
\end{proof}

\begin{remark}
When $\Omega = \lbrace x_0 \rbrace$ and the system is linear time invariant (LTI), the optimal solution of \eqref{eq:OTsamp2} is unique. This is proved by showing the time-reversed version of the problem is equivalent to a fixed end-point problem for an LTI system with a strictly convex objective.
\end{remark}

Next, we will relate minimizers of the problems 
\eqref{eq:OTsamp2}, \eqref{eq:OTsamp2ep}, \eqref{eq:OTsamp2epdel}, and \eqref{eq:OTsamp2epdelN} 
in the limits $\varepsilon \downarrow 0$, $\delta \to 0$, and $N \to \infty$ using $\Gamma$-convergence \cite{braides2002gamma}. For each case, we state a $\Gamma$–convergence result and a convergence of minimizers result. Since the proof closely resembles those in \cite{craig2025blob}, we will provide an outline, highlighting the differences. % Because the present setting differs in both the choice of spaces and in the form of the problem, some points require additional explanation. 

\begin{proposition}($\Gamma-$convergence as $\varepsilon \downarrow 0$)
\label{prop:gamma1}
Given Assumptions~\ref{asmp1:sublin}, \ref{asmp2:ctrl} and \ref{asmp:fullleb}, let $c = \int_{\mathcal{R}_\Omega^T} dx$. Then:
\begin{enumerate}[label=(\roman*)]
    \item  
    For every sequence $(\mathbb{P}_\varepsilon) \subset \mathcal{P}(\Omega \times \mathcal{U})$ converging
    to $\mathbb{P} \in \mathcal{P}(\Omega \times \mathcal{U})$ in the narrow topology, we have
    \[
        \mathcal{E}(\mathbb{P}) \ \le\ \liminf_{\varepsilon \downarrow 0} \ [\mathcal{E}_\varepsilon(\mathbb{P}_\varepsilon) - \frac{1}{c \varepsilon}].
    \]

    \item 
    For every $\mathbb{P} \in \mathcal{P}(\Omega \times \mathcal{U})$, we have
    \[
        \mathcal{E}(\mathbb{P}) \ \ge\ \limsup_{\varepsilon \downarrow 0} [ \mathcal{E}_\varepsilon(\mathbb{P}) - \frac{1}{c \varepsilon}].
    \]

    \item \label{it:liminfDel} For every sequence $(\mathbb{P}_\delta)\subset \mathcal{P}(\Omega \times \mathcal{U})$
    converging to $\mathbb{P}\in \mathcal{P}(\Omega \times \mathcal{U})$ in the narrow topology,
    \[
      \mathcal{E}_{\varepsilon}(\mathbb{P})
      \;\le\; \liminf_{\delta\to 0}\, \mathcal{E}_{\varepsilon,\delta}(\mathbb{P}_\delta).
    \]

    \item \label{it:limsupDel} For every $\mathbb{P}\in \mathcal{P}(\Omega \times \mathcal{U})$, we have
    \[
      \mathcal{E}_{\varepsilon}(\mathbb{P})
      \;\ge\; \limsup_{\delta\to 0}\,\mathcal{E}_{\varepsilon,\delta}(\mathbb{P}).
    \]

    \item For every sequence $(\mathbb{P}_N) \subset \mathcal{P}(\Omega \times \mathcal{U})$ converging
    to $\mathbb{P} \in \mathcal{P}(\Omega \times \mathcal{U})$ in the narrow topology, we have
    \[
        \mathcal{E}_{\varepsilon,\delta}(\mathbb{P})
        \ \le\ \liminf_{N \to \infty} \ \mathcal{E}_N(\mathbb{P}_N).
    \]

    \item For every $\mathbb{P} \in \rev{\mathcal{P}(\Omega\times \mathcal{U})}$, we have
    \[
        \mathcal{E}_{\varepsilon,\delta}(\mathbb{P})
        \ \ge\ \limsup_{N \to \infty} \ \mathcal{E}_N(\mathbb{P}).
    \]
\end{enumerate}
\end{proposition}

\begin{proof} 
\noindent\textbf{(i)}
Let $(\mathbb{P}_\varepsilon) \subset \mathcal{P}(\Omega \times \mathcal{U})$ be such that $\mathbb{P}_\varepsilon \rightharpoonup \mathbb{P}$. \rev{The claim is trivial for infinite \textit{liminf} and hence we consider finite \textit{liminf}. We pick a subsequence whose \emph{lim} is the \emph{liminf} since such a subsequence always exists}, i.e., pick $\mathbb{P}_\varepsilon$ such that
\begin{align*}
\liminf_{\varepsilon \downarrow 0} [\mathcal{E}_\varepsilon(\mathbb{P}_\varepsilon) -\frac{1}{c \varepsilon}] = \lim_{\varepsilon \downarrow 0} \mathcal{E}_\varepsilon(\mathbb{P}_\varepsilon) -\frac{1}{c \varepsilon}.
\end{align*}
Since convergent sequences are bounded, $\mathcal{F}_\varepsilon (E_{\#}\mathbb{P}_\varepsilon) -\frac{1}{c \varepsilon}$ is uniformly bounded over $\varepsilon>0$---implying $\lim_{\varepsilon \downarrow 0} \varepsilon \mathcal{F}_{\varepsilon}(E_{\#}\mathbb{P}_\varepsilon) =\frac{1}{c}$. 
Since $\mathcal{F}_\varepsilon$ is lower-semicontinous and $E_{\#}$ is continuous, the map $\mathbb{P} \mapsto \varepsilon\mathcal{F}_\varepsilon(E_{\#}\mathbb{P}) $ is also lower semicontinuous (l.s.c.). Thus, by definition of l.s.c.,
\[ \varepsilon \mathcal{F}_\varepsilon(E_{\#}\mathbb{P}) \leq \liminf_{\varepsilon \downarrow 0}   \varepsilon \mathcal{F}_\varepsilon(E_{\#}\mathbb{P}_\varepsilon) = \frac{1}{c}.  \]
By definition of $\mathcal{F}_\varepsilon$ and Jensen's inequality, we have that \rev{$\varepsilon \mathcal{F}_\varepsilon(E_{\#}\mathbb{P})\ge 1/c$} \rev{(see end of Sec.~II)}. Thus, $\mathcal{F}_\varepsilon(E_{\#}\mathbb{P}) = 1/c$, i.e., $E_\# \mathbb{P}$ is the uniform (Lebesgue) measure on the reachable set and $\mathcal{E}(\mathbb{P}) = \mathcal{G}(\mathbb{P})$. 
\rev{Since $\mathcal{G}$ is l.s.c. (see proof of Thm.\ref{thm:existence}), we have} 
\begin{align}
&\mathcal{E}(\mathbb{P}) 
= \mathcal{G}(\mathbb{P}) \leq \liminf_{\varepsilon \downarrow 0} 
    \mathcal{G}(\mathbb{P}_\varepsilon) \notag\\
&\leq \liminf_{\varepsilon \downarrow 0} 
    \big( \mathcal{G}(\mathbb{P}_\varepsilon) 
        + \mathcal{F}_\varepsilon(\mathbb{P}_\varepsilon) -\frac{1}{c \varepsilon} \big) = \liminf_{\varepsilon \downarrow 0} 
    [\mathcal{E}_\varepsilon(\mathbb{P}_\varepsilon) -\frac{1}{c \varepsilon}].\notag
\end{align}

\smallskip
\noindent\textbf{(ii)}
For any $\mathbb{P} \in \mathcal{P}(\Omega \times \mathcal{U})$, if $\mathcal{E}(\mathbb{P}) = \infty$, the claim is trivial. If $\mathcal{E}(\mathbb{P})$ is finite, then $\mathcal{F}(\mathbb{P}) = 0$ and $E_\# \mathbb{P}$ is uniform distribution on the reachable set. Thus, by definition of $\mathcal{F}_\varepsilon$, we have $\mathcal{F}_\varepsilon(\mathbb{P}) - \frac{1}{c \varepsilon} = 0,\; \forall \;\varepsilon >0$. {\color{blue} Adding $\mathcal{G}(\mathbb{P})$, we get  
\begin{equation}
\mathcal{E}_\varepsilon(\mathbb{P}) -\frac{1}{c \varepsilon} = \mathcal{G}(\mathbb{P}) + \mathcal{F}_\varepsilon(\mathbb{P}) -\frac{1}{c \varepsilon}
=  \mathcal{E}(\mathbb{P}),\; \forall~\varepsilon>0.\notag
\end{equation}}
Thus, the inequality in (ii) is trivially satisfied. The claims in (iii) and (v) can be proved by mirroring the arguments for (i) and likewise, (iv) and (vi) are proved mirroring (ii). 
\end{proof}

Given this result, we can prove convergence of minimizers of sequence of optimization problems.

\begin{proposition}[Convergence of minimizers]
\label{prop:OTsamp-conv}
Given Assumptions \ref{asmp1:sublin}, \ref{asmp2:ctrl}, \ref{asmp:fullleb}, and weak topology on $\mathcal{U}$, let $\mathbb{P}_\varepsilon \in \mathcal{P}(\Omega \times \mathcal{U})$ be a sequence of minimizers of~\eqref{eq:OTsamp2ep} %corresponding to a sequence of $\varepsilon$
, $\mathbb{P}_\delta \in \mathcal{P}(\Omega \times \mathcal{U})$ be a sequence of solutions to~\eqref{eq:OTsamp2epdel}, and $(\mathbf{x}_{N,0},\mathbf{u}_{N})\in \Omega^N\times\mathcal{U}^N$ be a sequence of solutions to \eqref{eq:OTsamp2epdelN} where 
    \[(\mathbf{x}_{N,0}, \mathbf{u}_N) :=\big ( (x_{N,0,1}, u_{N,1}) ,.... (x_{N,0,N}, u_{N,N})\big ).
    \] Then:
\begin{enumerate}[label=(\roman*)]
    \item there exists $\mathbb{P} \in \mathcal{P}(\Omega \times \mathcal{U})$ such that, upto a subsequence, $\mathbb{P}_\varepsilon \rightharpoonup\mathbb{P}$ and $\mathbb{P}$ is a minimizer of~\eqref{eq:OTsamp2}.
Futhermore, $E_{\#} \mathbb{P}_{\varepsilon}$ converges to the uniform Lebesgue measure on the reachable set.
 \item there exists $\mathbb{P} \in \mathcal{P}(\Omega \times \mathcal{U})$ such that, up to a subsequence, $\mathbb{P}_\delta \to \mathbb{P}$ in $\mathcal{P}(\Omega \times \mathcal{U})$, and $\mathbb{P}$ is \rev{a minimizer of} \eqref{eq:OTsamp2ep}.
    \item there exists $\mathbb{P} \in \mathcal{P}(\Omega\times \mathcal{U})$ such that, up to a subsequence, $\frac{1}{N} \sum_{i=1}^N \delta_{(x_{N,0,i},u_{N,i})} \to \mathbb{P}$ in $\mathcal{P}(\Omega \times \mathcal{U})$, and $\mathbb{P}$ is \rev{a minimizer of} \eqref{eq:OTsamp2epdel}.
\end{enumerate}

\end{proposition}
\begin{proof}
\textbf{Proof of (i).} Any minimizer of
\begin{align}\label{eq:augOTep}
\min_{
    \mathbb{P} \in \mathcal{P}(\Omega \times \mathcal{U})}  
 \mathcal{E}_{\varepsilon}(\mathbb{P})  -\frac{1}{c \varepsilon} 
\end{align}
is also a minimizer of~\eqref{eq:OTsamp2ep} \rev{since $1/c\varepsilon$ is a constant}. Therefore, it is sufficient to show minimizers of \eqref{eq:augOTep} converge to the minimizer of ~\eqref{eq:OTsamp2}. %Now, the proof follows exaclty as in proof of Proposition 1.2 in \cite{craig2025blob}.

% Let $\bar{\mathbb{P}}\in \mathcal{P}(\Omega\times \mathcal{U})$ be a feasible solution of \eqref{eq:OTsamp2}. 

% , then from optimality of the $\varepsilon$-problem and feasibility of $\bar{\mathbb{P}}$ 
% \[
% \mathcal{E}_\varepsilon(\mathbb{P}_\varepsilon)-\frac{1}{c \varepsilon}
% \ \le\
% \mathcal{E}_\varepsilon(\bar{\mathbb{P}})-\frac{1}{c \varepsilon} = \mathcal{E}(\bar{\mathbb{P}})<\infty,
% \] 

% for all $\varepsilon>0$. Thus, $\sup_\varepsilon\bigl(\mathcal{E}_\varepsilon(\mathbb{P}_\varepsilon)-\tfrac{1}{c\varepsilon}\bigr)<\infty$. 

Let $\mathbb{P}_\varepsilon = \arg\min \left(\mathcal{E}_{\varepsilon}  -\frac{1}{c \varepsilon} \right)$. Since $\mathcal{U}$ is compact in weak topology, $\mathcal{P}(\Omega \times \mathcal{U})$ is compact in narrow topology. \rev{Since sequences in compact sets have convergent subsequences, there exists a converging subsequence of $\mathbb{P}_\varepsilon\rightharpoonup\mathbb{P}$, for some $\mathbb{P}\in \mathcal{P}(\Omega\times \mathcal{U})$.}
Then by Prop.~\ref{prop:gamma1}.i, we have
\[
\mathcal{E}(\mathbb{P})
\ \le\ \liminf_{\varepsilon \downarrow 0}\Bigl[\mathcal{E}_\varepsilon(\mathbb{P}_\varepsilon)-\frac{1}{c \varepsilon}\Bigr].
\]
Similarly by Prop. \ref{prop:gamma1}.ii, for any $\mathbb{Q} \in \mathcal{P}(\Omega \times \mathcal{U})$,  we have
\[
\limsup_{\varepsilon \downarrow 0}\Bigl[\mathcal{E}_\varepsilon(\mathbb{Q})-\frac{1}{c \varepsilon}\Bigr]
\ \le\ \mathcal{E}(\mathbb{Q}).
\]
\rev{Since $\mathbb{P}_\varepsilon$ is a minimizer, $\mathcal{E}_\varepsilon(\mathbb{P}_\varepsilon)-\frac{1}{c \varepsilon}\le \mathcal{E}_\varepsilon(\mathbb{Q})-\frac{1}{c \varepsilon}$ and}  
\[
\mathcal{E}(\mathbb{P})
 \!\le\! \liminf_{\varepsilon \downarrow 0}\Bigl[\mathcal{E}_\varepsilon(\mathbb{P}_\varepsilon)-\frac{1}{c \varepsilon}\Bigr]
 \!\le\! \limsup_{\varepsilon \downarrow 0}\Bigl[\mathcal{E}_\varepsilon(\mathbb{Q})-\frac{1}{c \varepsilon}\Bigr]
 \le \mathcal{E}(\mathbb{Q}),
\]
for all $\mathbb{Q}$. Hence $\mathbb{P}$ is a minimizer of \eqref{eq:OTsamp2}. 

\rev{For any subsequence $\mathbb{P}_{\varepsilon}\rightharpoonup \mathbb{P}$, we have $E_\#\mathbb{P}_\varepsilon\rightharpoonup E_\#\mathbb{P}$ since $E_\#$ is continuous, and $E_\#\mathbb{P} = m_R$, since $\mathbb{P}$ is a minimizer of \eqref{eq:OTsamp2}.}
%Clearly, $E_{\#} \mathbb{P} = m_R$ and $E_{\#} \mathbb{P}_\varepsilon \rightarrow E_{\#} \mathbb{P} = m_R$ for every converging subsequence of $\mathbb{P}_{\varepsilon}$, irrespective of the limit $\mathbb{P}$. 
Since $E_\#\mathbb{P}_\varepsilon$ lies in a compact set, and `every' convergent subsequence of $E_\#\mathbb{P}_\varepsilon$ converges to the same limit $m_R$, the sequence must converge to $m_R$. 
Similar arguments can be used to prove (ii) and (iii); See \cite{craig2025blob}. %Since these proofs strongly resemble proof of (i), and are  we have omitted them. 

\if 0
If every subsequence \textcolor{red}{(You went from `every converging subsequence' to `every subsequence' here. How?)} \textcolor{blue}{This is true if the sequence itself is in a compact set and every converging subsequence is converging to the same limit. Else you can take a subsequence that is diverging from this limit, and then  take a subsequence of the subsequence that is converging to a different limit. That would lead to a contradiction that every subsequence is converging to the same limit.}
of $E_{\#}\mathbb{P}_{\varepsilon}$ is converging to the same limit, then the sequence itself is converging. This concludes the second part. 
\fi
\end{proof}

\section{Numerical Examples}\label{sec:numerical}
In this section, we illustrate the effectiveness of the proposed algorithm by applying it for two nonlinear ODEs that have  strong attractors.

\subsection{Vander Pol Oscillator}
Consider the forced Vander Pol Oscillator, given by,
\begin{align}\label{eq:vanderpol}
    \dot{x}(t) &= y(t),\;\dot{y}(t) = \mu(1-x(t)^2)y(t)-x(t) + u(t).
\end{align}
Under zero control, the system exhibits a periodic orbit. In fact, the orbit is robustly stable in the sense that for a bounded control $|u(t)| \le u_{max}$ with small enough $u_{max}$, all trajectories converge to the same periodic orbit. Consequently, particle-based sampling methods, such as Monte Carlo, would lead to samples in the neighborhood of the orbit and does not represent the reachable subset well. As seen in Fig. \ref{fig:vanderpol:noentropy}, simulating $N=100$ particles via forward Euler time integration starting from bounded set $[-1,1]^2$ and applying a Brownian motion control projected onto the set $|u(t)|\le0.1$, lead to the periodic orbit by the final time, $t=15$.

\begin{figure}[!t]
    \centering
    \begin{subfigure}[b]{0.47\linewidth}
    \includegraphics[width=\textwidth]{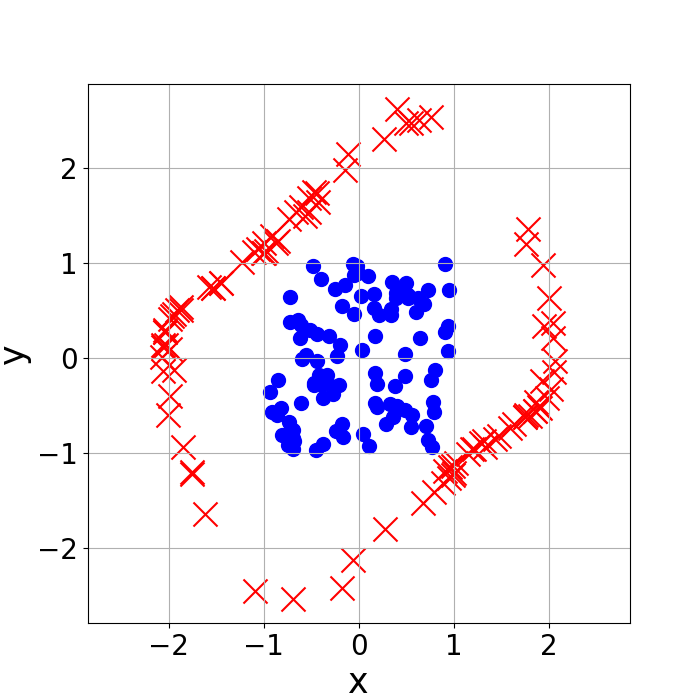}
    \caption{Random control inputs}
    \label{fig:vanderpol:noentropy}
    \end{subfigure}
    \begin{subfigure}[b]{0.47\linewidth}
    \includegraphics[width=\textwidth]{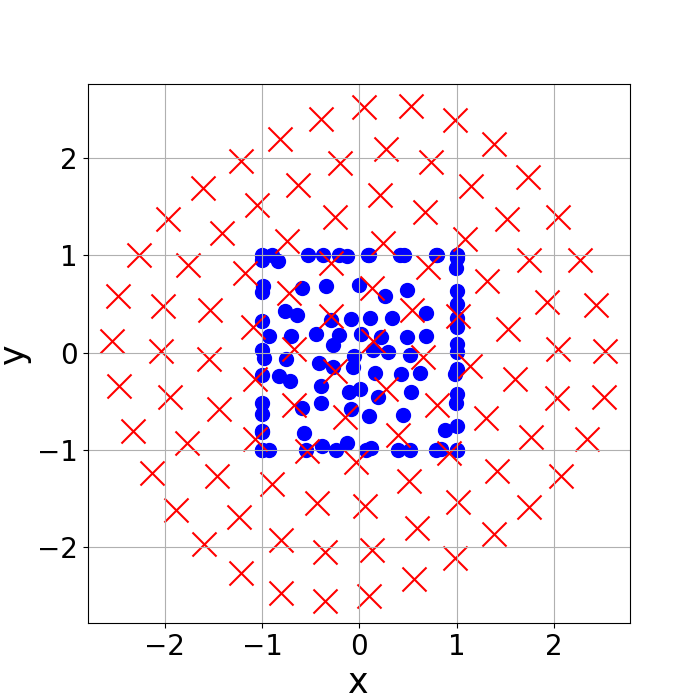}
    \caption{Controls from \eqref{eq:OTsamp2epdelN}}
    \label{fig:vanderpol:entropy}
    \end{subfigure}
    
    \caption{Final positions (at $t=15$, marked by `\textcolor{red}{$\times$}') of $N=100$ particles obeying Eq.~\eqref{eq:vanderpol} starting within $[-1,1]^2$ (marked by `\textcolor{blue}{$\bullet$}'). States evolve under two control laws: (a) projected Brownian control $u(t)\in[-0.1,0.1]$, and (b), control obtained by solving the optimization problem in \eqref{eq:OTsamp2epdelN}.}
    \label{fig:vanderpol}
\end{figure}

In contrast, there exist controls that uniformly sample from the reachable set; See Fig. \ref{fig:vanderpol:entropy}. Here, the simulation parameters are identical to the case in Fig. \ref{fig:vanderpol:noentropy}, however, applied controls are obtained by using Pontryagin's Maximum principle based gradient descent to solve \eqref{eq:OTsamp2epdelN}. 
% \begin{figure}
%     \centering
%     % \begin{subfigure}[b]{0.5\textwidth}
%     % \includegraphics[width=\linewidth]{figs/vanderpol_control.png}
%     % \caption{Bounded controls obtained from \eqref{eq:OTsamp2epdelN}}
%     % \label{fig:vanderpol:entropy:u}
%     % \end{subfigure}
%     % \begin{subfigure}[b]{0.5\textwidth}
%     \includegraphics[width=\linewidth]{figs/vanderpol_initfinal.png}
%         % \caption{Initial and Final states under controls presented in Fig. \ref{fig:vanderpol:entropy:u}}
%         % \label{fig:vanderpol:entropy:x}
%     % \end{subfigure}
%     \caption{$N=100$ simulations of forced Vander Pol dynamics over time $t\in [0,15]$ under control obtained by solving the optimization problem in \eqref{eq:OTsamp2epdelN}.}
%     \label{fig:vanderpol:entropy}
% \end{figure}
\subsection{Three-arm Planar Robot}

We next consider the reachability of a planar rigid robot with three actuated revolute joints with joint lengths $(1.0,\,0.3,\,0.4)$, and masses $(1.0,\,0.3,\,0.2)$, respectively. Let $q(t)=(q_1,q_2,q_3)^\top$ be the joint angles and $\omega(t)=\dot q(t)$ the joint velocities. The dynamics of this six-dimensional system are 
\begin{align}\label{eq:3arm}
\dot q(t) &= \omega(t),\;
M \dot \omega(t) = -D\,\omega(t) - gG\!\big(q(t)\big) + u(t),
\end{align}
where $u(t)\in[-0.2, 0.2]^3$, $D=0.05$ is viscous damping, $M=\mathrm{diag}(J_1,J_2,J_3)$ is the inertia matrix, and $G(q)$ are gravity torques. Denoting link center-of-mass offsets $\bar l_i=\tfrac{1}{2}l_i$ and $I_i=\tfrac{1}{12}m_i l_i^2$ as the link inertias, we get \vspace{-1mm}
\begin{align*}
J_1 &= I_1 +m_1 \bar l_1^2 + m_2l_1^2 + m_3l_1^2 +J_2,\\
J_2 &= I_2 +m_2 \bar l_2^2 + m_3l_2^2 +J_3,\quad J_3 = I_3 + m_3 \bar l_3^2,\\
G_1(q) &= m_1 \bar l_1 \cos q_1 + m_2 l_1\cos q_1 + m_3 l_1\cos q_1 +G_2(q),\\
G_2(q) &= m_2 \bar l_2 \cos (q_1+q_2) + m_3 l_2\cos (q_1+q_2) + G_3(q),\\
G_3(q) &= m_3 \bar l_3 \cos (q_1+q_2+q_3).
\end{align*}

\vspace{-1mm}
As seen in Fig.~\ref{fig:3arm:noentropy}, projected Brownian torques do not cover the workspace of the end-effector, however, \eqref{eq:OTsamp2epdelN} produces controls that substantially improve reachable set approximation; see Fig.~\ref{fig:3arm:entropy}.

\begin{figure}[!t]
    \centering
    \begin{subfigure}[b]{0.47\linewidth}
     \includegraphics[width=\linewidth]{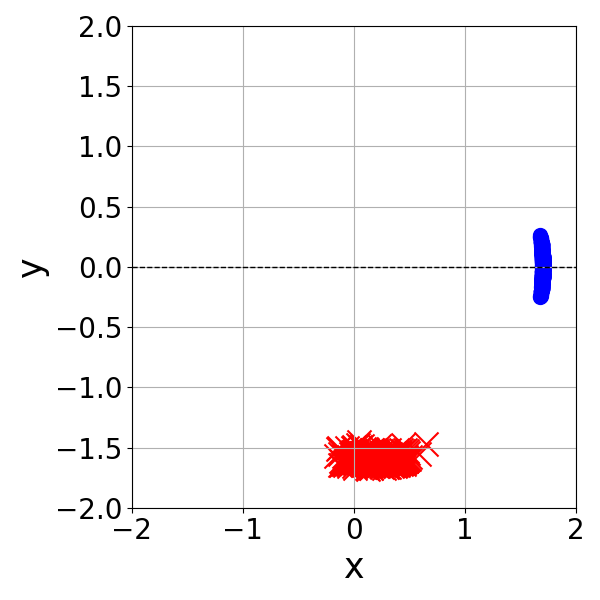}
    \caption{Random control inputs}
    \label{fig:3arm:noentropy}
    \end{subfigure}
    \begin{subfigure}[b]{0.47\linewidth}
     \includegraphics[width=\textwidth]{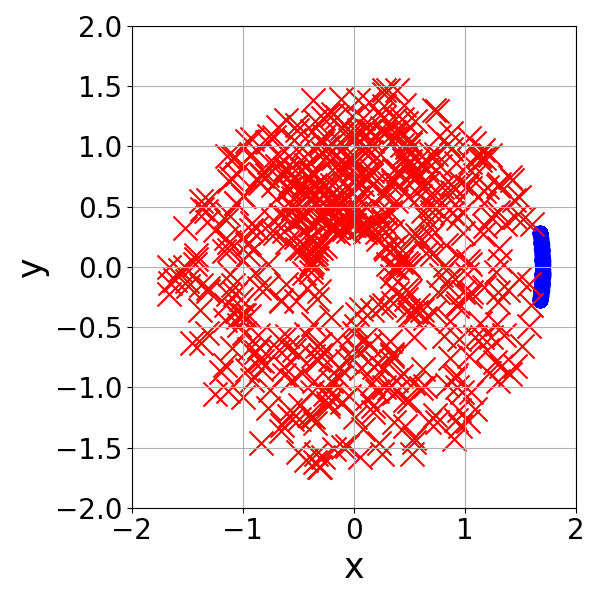}
     \caption{Controls from \eqref{eq:OTsamp2epdelN}}
    \label{fig:3arm:entropy}    
    \end{subfigure}
    \caption{Final positions (at $t=20$, marked by `\textcolor{red}{$\times$}') of $N=600$ particles obeying Eq.~\eqref{eq:3arm} starting with joint angles and velocities within $[-0.1,0.1]$ (marked by `\textcolor{blue}{$\bullet$}'). States evolve under two control laws: (a) projected Brownian control $u(t)\in[-0.2,0.2]$, and (b), control obtained by solving the optimization problem in \eqref{eq:OTsamp2epdelN}.}
    \label{fig:pendulum}
\end{figure}

\section{Conclusion}
In conclusion, we have provided a optimal transport formulation of sampling from the reachable set. Using a series of regularizations we developed a finite dimensional optimal control problem that approximates the optimal transport formulation. Moreover, leveraging tools from Gamma convergence, we proved that minimizer converge up to (subsequences) to the limit problem. We validated the approach through  numerical experiments on the Van der Pol oscillator and a model of three-arm robot. %Potential future directions include extending this method to compute reachable sets in higher dimensions using modern machine learning tools.

\bibliographystyle{ieeetr}
\bibliography{ref}

\newpage

\appendix

\subsection{More Examples}

In this section, we apply the results presented in this paper and solve the uniform sampling of reachable set for additional examples -- specifically, the Duffing Oscillator.% and a rigid 3-arm Robotic arm manipulator.

\subsubsection{Duffing Oscillator}
Consider the (bistable) Duffing oscillator with state $(x,y)$, given by
\begin{equation}
\label{eq:duffing}
\dot x = y, \; \dot y = -\delta\, y + \alpha\, x - \beta\, x^{3} + u,
\end{equation}
where $\delta=0.3$ is the linear damping, and $\alpha=\beta=1$. $[0,0]$ is an unstable equilibrium and any non-zero perturbative force, $u(t)$, generates chaotic trajectories converging to $(\pm\sqrt{\alpha/\beta},0)$.

Similar to the examples in Sec. \ref{sec:numerical}, we simulate $N=100$ particles using forward Euler time integration while applying a stochastic control $u(t)\in[-1,1]$ at each time-step. See Fig. \ref{fig:duffing:noentropy} shows that all particles starting from origin, after time $t=5$ are restricted to a small subspace of the reachable set due to the presence of strong attractors at $(\pm\sqrt{\alpha/\beta},0)$.

\begin{figure}[!ht]
    \centering
    \includegraphics[width=0.8\linewidth]{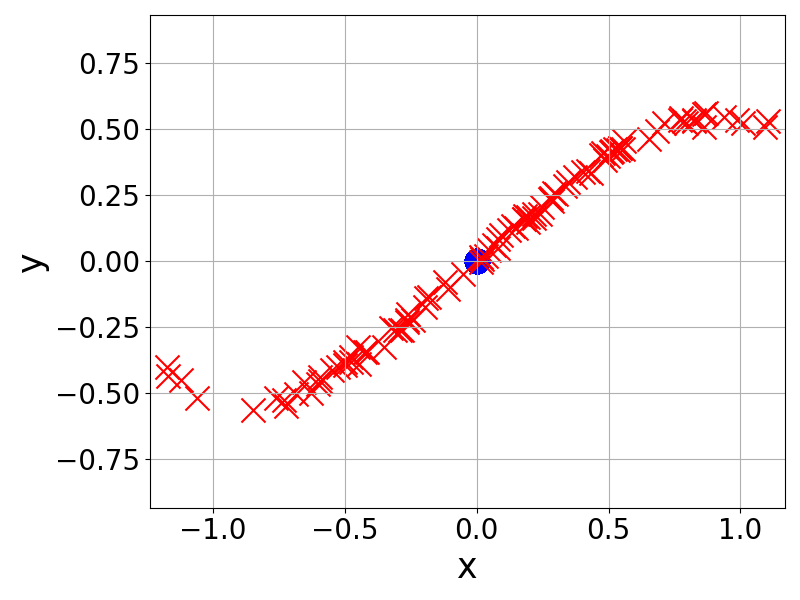}
    \caption{Final positions (at $t=5$, marked by `\textcolor{red}{$\times$}') of $N=100$ particles obeying Eq.~\eqref{eq:duffing} starting at $[0,0]$ (marked by `\textcolor{blue}{$\bullet$}'). States evolve under uniformly distributed stochastic control $u(t)\in[-1,1]$.}
    \label{fig:duffing:noentropy}
\end{figure}

However, applying the controls obtained by  solving \eqref{eq:OTsamp2epdelN}, we obtain a better representation of the reachable set, which is uniformly sampled; See Fig. \ref{fig:duffing:entropy}. 

\begin{figure}
    \centering
    % \begin{subfigure}[b]{0.5\textwidth}
    %  \includegraphics[width=\linewidth]{figs/duffing_control.png}   
    %     \caption{Bounded controls obtained from \eqref{eq:OTsamp2epdelN}}
    % \label{fig:duffing:entropy:u}
    % \end{subfigure}
    % \begin{subfigure}[b]{0.5\textwidth}
     \includegraphics[width=0.8\linewidth]{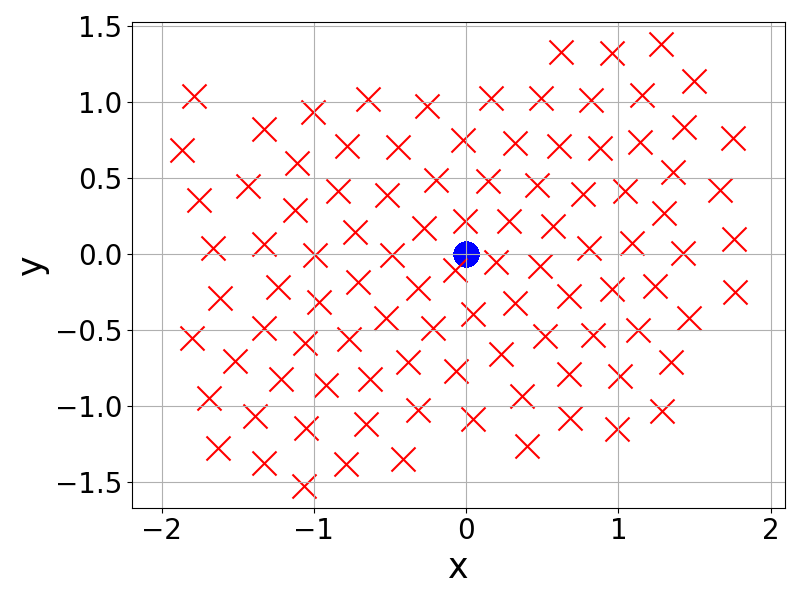}   
    %     \caption{Initial and Final states under controls presented in Fig. \ref{fig:duffing:entropy:u}}
    % \label{fig:duffing:entropy:x}
    % \end{subfigure}
    \caption{Final positions (at $t=5$, marked by `\textcolor{red}{$\times$}') of $N=100$ particles obeying Eq.~\eqref{eq:duffing} starting at $[0,0]$ (marked by `\textcolor{blue}{$\bullet$}'). States evolve under the control obtained by solving the optimization problem in \eqref{eq:OTsamp2epdelN}.}
    \label{fig:duffing:entropy}
\end{figure}

\subsubsection{Single-arm Robot}
Next, we consider the reachability of a rigid-robot arm actuated by a torque motor. In practice, this system can be modeled by a forced nonlinear Pendulum with damping, whose dynamics are given by
\begin{align}\label{eq:nonlinearpendulum}
    \dot{x}(t) &= y(t),\; \dot{y}(t) = -\frac{g}{l}\sin(x(t)) - \beta y(t) + u(t),
\end{align}
where $x$ is the angular position of the robotic arm, $y(t)$ is the angular velocity, $u(t)$ is the torque applied, $g=9.81 m/s$ is acceleration due to gravity, $l=1$ is length of the arm, and $\beta =0.1$ is damping coefficient. Assuming the motor has torque constraints, $u(t)\in [-1,1]$, we can run $N=100$ Monte Carlo simulations of the robot, starting with varied position and angular velocities $[x_i(0), y_i(0)] \in [-1,1]^2$. Despite simulating for $t=15$, one cannot accurately predict all reachable configurations of the robot under this bounded input, since zero angular displacement, $x(t)=0$, is a robustly stable attractor; See Fig. \ref{fig:pendulum:noentropy}. However, by solving \eqref{eq:OTsamp2epdelN}, one can find control inputs that show a better approximation of all reachable configurations; See Fig. \ref{fig:pendulum:entropy}.

% \begin{figure}
%     \centering
%     \includegraphics[width=\linewidth]{figs/pendulum_noentropy.png}
%     \caption{$N=100$ simulations of forced pendulum dynamics over time $t\in [0,15]$. Initial and final states under uniformly distributed stochastic control $u(t)\in[-1,1]$.}
%     \label{fig:pendulum:noentropy}
% \end{figure}

\begin{figure}[!t]
    \centering
     \includegraphics[width=0.8\linewidth]{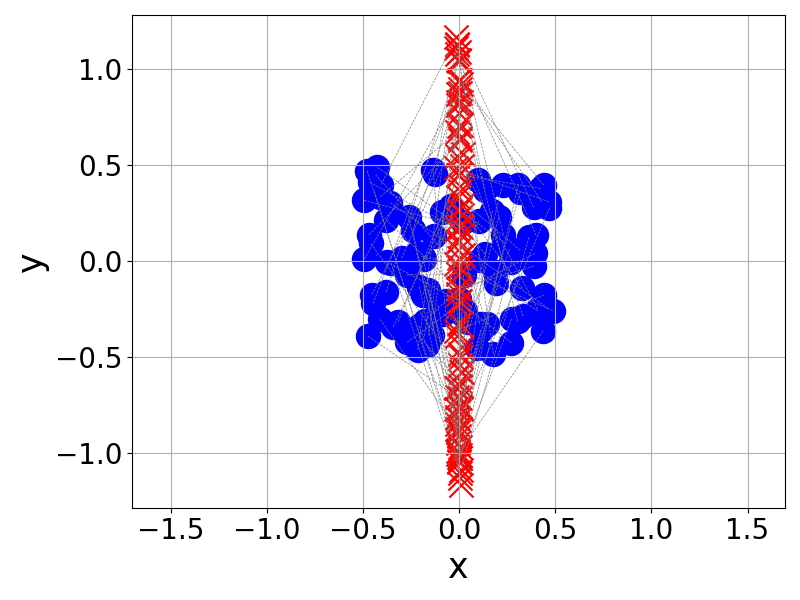}
    \caption{Final positions (at $t=15$, marked by `\textcolor{red}{$\times$}') of $N=100$ particles obeying Eq.~\eqref{eq:nonlinearpendulum} starting within $[-1,1]^2$ (marked by `\textcolor{blue}{$\bullet$}'). States evolve under uniformly distributed stochastic control $u(t)\in[-1,1]$.}
        \label{fig:pendulum:noentropy}
\end{figure}

\begin{figure}[!t]
    \centering
     \includegraphics[width=0.8\linewidth]{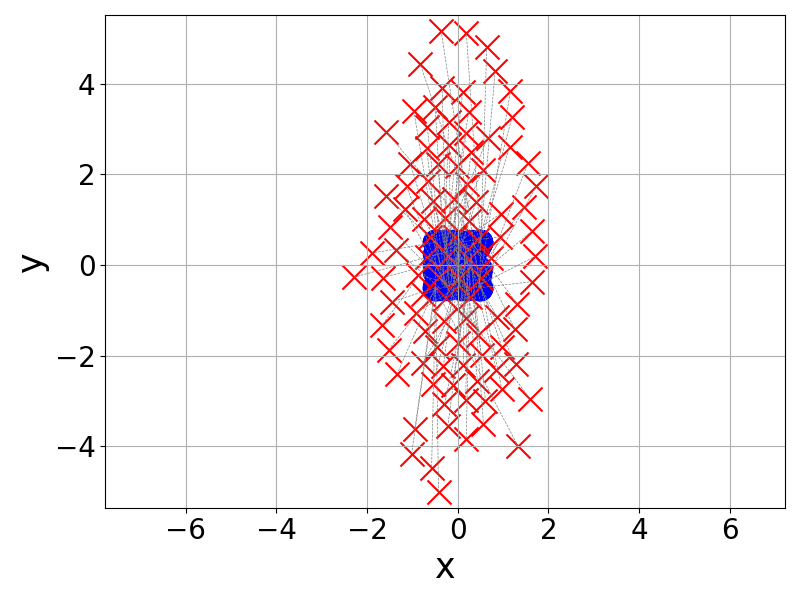}    
    \caption{Final positions (at $t=15$, marked by `\textcolor{red}{$\times$}') of $N=100$ particles obeying Eq.~\eqref{eq:nonlinearpendulum} starting within $[-1,1]^2$ (marked by `\textcolor{blue}{$\bullet$}'). States evolve under control obtained by solving the optimization problem in \eqref{eq:OTsamp2epdelN}.}
    \label{fig:pendulum:entropy}
\end{figure}

\subsection{Numerical Algorithm}
In this section we describe the numerical method used to solve the 
problem~\eqref{eq:OTsamp2epdelN}. For notational convenience we introduce the
vectorized form of the optimal control problem:
\begin{align}
&\inf_{\substack{\mathbf{x}_0 \in \R^{Nd}\\ \mathbf{u}_N \in \mathcal{U}^N}}
   \frac{1}{N}\int_0^T |\mathbf{u}_N(t)|^2\,dt \;+\; \Psi\big(\mathbf{x}_N(T)\big),\\
&\dot{\mathbf{x}}_N = F_N(\mathbf{x}_N,\mathbf{u}_N),\quad \mathbf{x}_N(0) = \mathbf{x}_{0,N}.\notag
\end{align}
Here the stacked state and initial condition vectors are
$\mathbf{x}_N(t) = (x_1(t),\ldots,x_N(t))^\top \in \R^{Nd}$ and
$\mathbf{x}_{0,N} = (x_{1,0},\ldots,x_{N,0})^\top \in \Omega^N \subset \R^{Nd}$.
The stacked control vector is
$\mathbf{u}_N(t) = (u_1(t),\ldots,u_N(t))^\top \in \R^{Nm}$,
and the dynamics are given by
$F_N(\mathbf{x}_N,\mathbf{u}_N) = (f(x_1,u_1),\ldots,f(x_N,u_N))^\top \in \R^{Nd}$.

To compute a local minimizer we employ a gradient descent scheme. The gradient
is characterized by the adjoint equation, as in Pontryagin’s maximum principle:
\begin{align}
\label{eq:adjN}
\dot{\mathbf{p}}_N &= -A_N(\mathbf{x}_N,\mathbf{u}_N)^\top \bp,\;
\bp(T) = \nabla_{\bx}\Psi\big(\bx(T)\big),
\end{align}
where $A_N(\mathbf{x}_N,\mathbf{u}_N)=\mathrm{diag}\big(A_1(x_1,u_1),\ldots,A_N(x_N,u_N)\big)$ and $A_i(x_i,u_i) = \nabla_{x_i} f(x_i,u_i)$. Applying the chain rule in the setting of functionals on Banach spaces, the
gradients of $J$ with respect to the initial state and the control are
\begin{align}
\label{eq:gradsN}
\nabla_{\mathbf{x}_{0,N}} J(\mathbf{x}_{0,N},\mathbf{u}_N) &= \mathbf{p}_N(0),\\[1ex]
\nabla_{\mathbf{u}_N} J(\mathbf{x}_{0,N},\mathbf{u}_N)(t) &= B(t)^\top \mathbf{p}_N(t) + \tfrac{2}{N}\,\mathbf{u}_N(t),\notag
\end{align}
with $B(t)=\mathrm{diag}\big(B_1(t),\ldots,B_N(t)\big)$ where $B_i(t) = \nabla_{u_i} F_N(x_i,u_i)$.

\begin{algorithm}[H]
\caption{Projected Gradient Descent for Problem \eqref{eq:OTsamp2epdelN}}
\begin{algorithmic}[1]
\State Initialize $\mathbf{x}_{0,N}^{(0)} \in \Omega^N$ and $\mathbf{u}_N^{(0)} \in \mathcal{U}^N$.
\For{$k = 0,1,2,\dots$ until convergence}
    \State Forward simulate the state dynamics \eqref{eq:OTsamp2epdelN} with $\mathbf{x}_{0,N}^{(k)}$ and $\mathbf{u}_N^{(k)}$.
    \State Backward simulate the adjoint system \eqref{eq:adjN} with terminal condition.
    \State Compute gradients using \eqref{eq:gradsN}.
    \State Update initial state (projected step):
    \[
      \mathbf{x}_{0,N}^{(k+1)} \gets 
      \Pi_{\Omega^N}\!\Big(\mathbf{x}_{0,N}^{(k)} 
      - \eta_k \,\nabla_{\mathbf{x}_{0,N}} J\big(\mathbf{x}_{0,N}^{(k)},\mathbf{u}_N^{(k)}\big)\Big).
    \]
    \State Update control (projected step, pointwise in $t$):
    \[
      \mathbf{u}_N^{(k+1)}(t) \gets 
      \Pi_{\mathcal{U}^N}\!\Big(\mathbf{u}_N^{(k)}(t) 
      - \eta_k \,\nabla_{\mathbf{u}_N} J\big(\mathbf{x}_{0,N}^{(k)},\mathbf{u}_N^{(k)}\big)(t)\Big).
    \]
\EndFor
\end{algorithmic}
\end{algorithm}

\subsection{Connection to Classical Optimal Transport}
The solution of the optimization problem~\ref{eq:OTsamp} can also be formulated as a classical optimal transport problem using Kantorovich formulation. To see this,
let \( c(x_0, x_T) \) denote the cost function associated with the optimal control problem
\begin{align*}
c(x_0, x_T) 
&= \inf_{u \in \mathcal{U}} &&\int_0^T |u(t)|^2 \, dt, \\
&\text{s.t. } &&Eq.~\eqref{eq:ctrsys}, ~x(0) = x_0,~ x(T) = x_T.
\end{align*}
Next, we define \( F : \mathbb{R}^d \times \mathcal{U} \rightarrow \mathbb{R}^d \times \mathbb{R}^d \) to be the map 
\begin{equation*}
F(x_0, u) = \big(x_0, E(x_0, u)\big),
\end{equation*}
where $E$ is the endpoint map for Eq. \eqref{eq:ctrsys}.
Given $\mathbb{P}\in \mathcal{P}(\R^d\times \mathcal{U})$, if \( \eta = F_{\#} \mathbb{P} \), then
\begin{align*}
&\int_{\mathbb{R}^d \times \mathbb{R}^d}\!\! c(x_0, x_T) \, d\eta(x_0, x_T) 
=\! \int_{\mathbb{R}^d \times \mathcal{U}} \!\!c\big(F(x_0, u)\big) \, d\mathbb{P}(x_0, u) \\
&\quad =\! \int_{\mathbb{R}^d \times \mathcal{U}}\!\! c\big(x_0, E(x_0, u)\big) \, d\mathbb{P}(x_0, u) \\
&\quad = \!\int_{\mathbb{R}^d \times \mathcal{U}} |u|^2 \, d\mathbb{P}(x_0, u).
\end{align*}
Therefore, our transport problem in \eqref{eq:OTsamp2} can be written in the more familiar Kantorovich form as
\begin{align*}
\inf_{\eta \in \mathcal{P}(\mathbb{R}^d\times \Rd)} 
    \int_{\mathbb{R}^d \times \mathbb{R}^d} c(x_0, x_T) \, d\eta(x_0, x_T)
    \\
\text{subject to } (\pi_2)_{\#}\eta = m_R,
\end{align*}
where \( \pi_2(x_0, x_T) = x_T \) is the projection onto the second coordinate.

The above derivation shows that our problem achieves a higher infimum than the Kantorovich version of the problem. However, one can also prove that the infima are equal by showing that for a measure $\eta$, one can construct a suitable $\mathbb{P}$. Thus, demonstrating equivalence. This is easy to see, for instance, if for each $x_0$ and $x_T$ there is unique optimal control map $F^{\rm inv}(x_0,x_T) = (x_0,u)$.

Note that this represents a relaxed version of the classical optimal transport problem, since the first marginal of the transport plan is allowed to vary, whereas in the classical formulation it is typically fixed. 

\subsection{Proof of Continuity of the End-point Map}
In this section, we prove the continuity of the end-point map.
\begin{theorem}
Given Assumption \ref{asmp1:sublin}, the end-point map $E:\Rd \times \mathcal{U} \rightarrow \Rd$ is continuous on $\Rd \times \mathcal{U}$ for the standard Euclidean topology on $\Rd$ and the weak topology on $\mathcal{U}$.
\end{theorem}
\begin{proof}
Let $(x_0^n, u^n)_{n=1}^\infty$ be a sequence converging to some $(x_0^*, u^*) \in \mathbb{R}^d \times \mathcal{U}$ (with respect to the product topology induced by the Euclidean topology on $\mathbb{R}^d$ and the weak topology on $\mathcal{U}$). This also implies that both $(x_0^n)$ and $(u^n)$ are uniformly bounded when $\mathcal{U}$ is endowed with the $L_2$-norm topology. Let $(x^n)_{n=1}^{\infty} \subset C([0,T]; \mathbb{R}^d)$ denote the corresponding solutions of \eqref{eq:ctrsys}. By standard compactness arguments (see, for instance, the proof of Theorem~23.11 in \cite{clarke2013functional}), one can extract a subsequence of $(x^n)_{n=1}^\infty$ that converges to a limit trajectory $x^*$, which is itself a solution of \eqref{eq:ctrsys}. The main difference from the setting in \cite{clarke2013functional} is that we also vary the initial condition $x_0^n$; however, this does not affect the argument, since uniform bounds on the integral of $|\dot{x}^n(t)|$ are ensured by the linear growth assumption. Hence, by Arzelà--Ascoli’s theorem and the equicontinuity of the sequence $(x^n)$, we obtain uniform convergence in $C([0,T];\Rd)$. The limit $x^*$ is unique, regardless of the subsequence chosen, because \eqref{eq:ctrsys} admits a unique solution corresponding to $(x_0^*, u^*)$. This establishes the continuity of the end-point map.

\end{proof}

\subsection{A Qualitative Comparison}
In this section, we utilize an alternative sampling-based algorithm to find the reachable sets for the examples presented in Sec.~\ref{sec:numerical}. More specifically, we implemented the sampling-based algorithm presented in a recent work,  \cite{lew2025convex}, for reachability analysis of nonlinear systems with bounded controls and initial conditions. By comparing the Figs. \ref{fig:lew:3arm}, \ref{fig:lew:vanderpol} with those in Sec.~\ref{sec:numerical}, on can make an empirical observation that the proposed algorithm performs equally well, if not better, than existing state-of-the-art for finding reachable sets. In particular, the reachable set for the 3-armed robot is a torus, which is non convex. Using convex hulls to approximate this reachable set is diffcult and the results are often conservative. However, the proposed particle-based OT formulation does not suffer from this issue.
\begin{figure}[!b]
    \centering
     \includegraphics[width=0.8\linewidth]{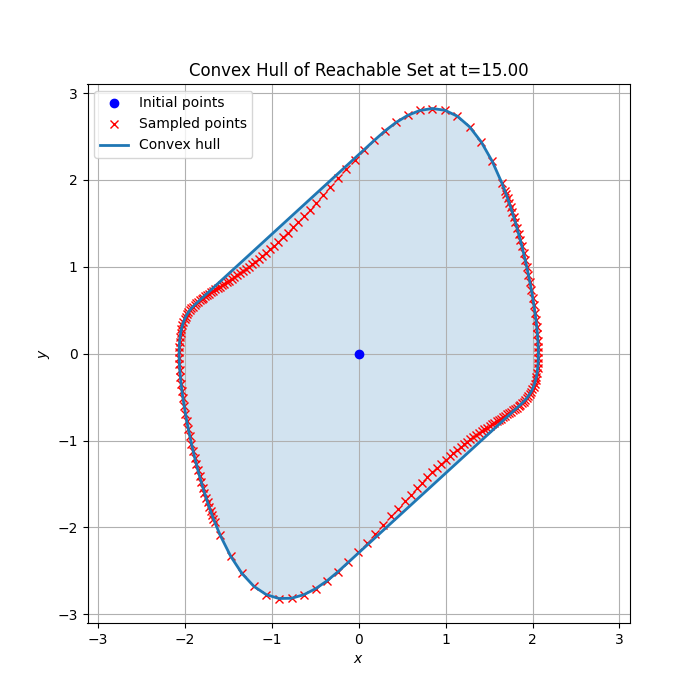}
    \caption{Plotting convex hulls of reachable set for Vanderpol Oscillator at $t=15$ under bounded controls $u(t)\in [-0.1,0.1]$ obtained using the algorithm in \cite{lew2025convex}.}
        \label{fig:lew:vanderpol}    
\end{figure}
\begin{figure}[!b]
    \centering
     \includegraphics[width=0.8\linewidth]{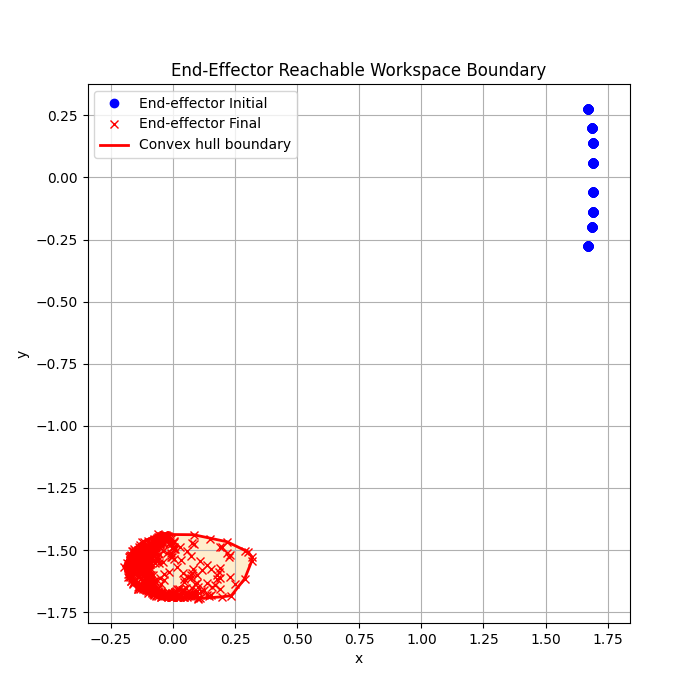}
        \caption{Plotting convex hulls of reachable set for 3-arm Planar robot at $t=20$ under bounded controls $u(t)\in [-0.2,0.2]$ obtained using the algorithm in \cite{lew2025convex}.}
    \label{fig:lew:3arm}    
\end{figure}

\end{document}